\documentclass{article}

\usepackage{arxiv}

\usepackage[utf8]{inputenc} 
\usepackage[T1]{fontenc}    
\usepackage{hyperref}       
\usepackage{url}            
\usepackage{booktabs}       
\usepackage{amsfonts}       
\usepackage{nicefrac}       
\usepackage{microtype}      
\usepackage{graphicx}
\usepackage[english]{babel}
\usepackage[square,numbers]{natbib}
\usepackage{doi}
\usepackage{amssymb}
\usepackage{amsmath}
\usepackage{color}
\usepackage{csquotes}
\usepackage{amsthm}
\usepackage{bm}
\usepackage{cleveref}
\usepackage{subfigure}

\newtheorem{theorem}{Theorem}[section]

\newtheorem{definition}[theorem]{Definition}

\title{Mathematical Modeling of the Role of Imitation in Crime Dynamics}


\author{ \href{https://orcid.org/0000-0002-0508-152X}{\includegraphics[scale=0.06]{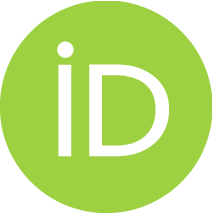}\hspace{1mm}Zeray H.~Gebrezabher} \\
	Faculty of Engineering and Natural Science\\
	Kadir Has University\\
	Istanbul, Turkey \\
	\texttt{zeray.hagos@khas.edu.tr} \\
	\And
	\href{https://orcid.org/0000-0001-6725-6949}{\includegraphics[scale=0.06]{orcid.eps}\hspace{1mm}Deniz Eroglu} \\
	Faculty of Engineering and Natural Science\\
	Kadir Has University\\
	Istanbul, Turkey \\
	\texttt{deniz.eroglu@khas.edu.tr} \\
}

\date{}


\hypersetup{
pdftitle={Mathematical Modeling of the Role of Imitation in Crime Dynamics},
pdfsubject={math},
pdfauthor={Zeray H.~Gebrezabher, Deniz Eroglu},
pdfkeywords={Crime model, Imitation, Basic reproduction number, Equilibria, Sensitivity analysis},
}

\begin{document}
\maketitle

\begin{abstract}
	Crime remains one of the significant problems that countries are grappling with globally. With shrinking economies and increasing poverty, crime has been on the rise in many countries. In this paper, we propose a system of non-linear ordinary differential equations to model crime dynamics in the presence of imitation. The model consists of four independent compartments: individuals who are not at risk of committing a crime, individuals at risk of committing a crime, individuals committing a crime, and individuals convicted and jailed for a crime. The model is analyzed using the basic reproduction number. The analysis shows the system has a locally asymptotically stable crime-free equilibrium when the basic reproduction number is less than unity. The model exhibits a backward bifurcation in which two endemic equilibria coexist with the crime-free equilibrium. When the basic reproduction number exceeds unity, the system has a locally asymptotically stable endemic equilibrium, and the crime-free becomes unstable. Numerical simulations are carried out to verify the analytical results. The sensitivity analysis shows that the relapse rate highly influences the basic reproduction number of our model. This indicates that the proportion of individuals leaving prisons and becoming criminals should be minimized to minimize crime.
\end{abstract}

\keywords{Crime model \and Imitation \and Basic reproduction number \and \and Dynamical systems \and Equilibria }


\section{Introduction}
\label{sec:intro}

Crime remains a significant problem globally. With shrinking economies and increasing poverty, crime has been on the rise in many countries. It is considered a contagious disease that spreads from one individual to another due to direct or indirect contact quickly or gradually depending on a host of factors in the world \cite{Goldstick2022,doi:10.2105/AJPH.2016.303550,doi:10.1177/00223433211026126,Forum2013}. According to work in \cite{Mallony}, \enquote{many of the leaders of organized crime groups begin their criminal careers by committing petty crimes as they try to imitate a criminal role model}. Over time, they commit more serious crimes, become gang members, and eventually work their way up the ladder to become trusted members of a gang organization \cite{Mallony}. Moreover, it has been demonstrated in \cite{Bandura} that the imitation effect exists in children who copy on-screen violence following exposure.

Mathematical modeling is fundamental to understanding, controlling, and predicting the long-term behaviors of various aspects of dynamic systems in real-world scenarios. Examples include modeling the spread of crime or violence, social media addiction, and corruption, to mention just a few \cite{mca24010029,doi:10.1007/s11135-017-0581-9,Mamo2021,Danford2020,doi:10.1155/2022/8073877,Shari-boomChap2016}. It has been extensively formulated and analyzed the dynamics of crime or violence in terms of infectious disease dynamics models using a set of differential equations \cite{Misra:2014, Goyaletal2014, Gilbertoetal2018, Sooknanan,mca24010029,doi:10.1007/s11135-017-0581-9,Shari-boomChap2016}. For example, the authors in \cite{Sooknanan} proposed an infectious disease model to analyze the growth of gangs in a population. In their model, the population was divided into four groups based on gang status and risk factors concerning gang membership. Similarly, the authors in \cite{Gilbertoetal2018} developed a mathematical model of crime that considers a social endemic transmission. Their model has divided the population into six classes: susceptibles, free criminals, criminals arrested and in jail, convicted criminals, judges, and police officers. However, none of the previous mathematical models of crime dynamics incorporates the effect of imitation. Therefore, we aim to fill this gap by studying aspects of peer influence as the driver of criminal recruitment. 

In this work, we use aspects of peer influence as the driver of the recruitment of criminals. Researchers have modeled peer influence on alcohol consumption in \cite{Buonomo2}. We propose a crime model in the presence of imitation described by a system of ordinary differential equations. Using the proposed model, we aim to depict how individuals within a community might adopt criminal activities by observing and imitating their peers. We seek to replicate real-world scenarios in which the decision to engage in criminal acts can be influenced by observing others committing similar deeds. Through mathematical simulations, researchers can explore the impact of imitation on crime dynamics, shed light on potential intervention strategies, identify potential hotspots of criminal activity, and ultimately contribute to developing more effective crime prevention policies.

The paper is organized as follows. In section \enquote{Mathematical model formulation}, model assumptions are presented, and a corresponding mathematical model of the crime dynamics in the presence of imitation is formulated. A rigorous mathematical analysis of the proposed crime model is analyzed and presented in terms of the \enquote{basic reproduction number} in section \enquote{Mathematical analysis of the crime model}. The equations will then be analyzed for the stability of the steady states, and the existence of backward bifurcation and sensitivity analysis of the model parameters will also be discussed. Section \enquote{Numerics} is devoted to numerical simulations to corroborate the theoretical results. The concluding remarks of the paper are provided in section \enquote{Conclusion}.

\section{Mathematical model formulation}
\label{sec:model-formulation}

We proposed a mathematical model of ordinary differential equations to investigate how imitation of a particular criminal act affects the spread of crime between individuals. To start with, we define the state variables of the model. In formulating our model, the total population, denoted by $N(t)$ at any time $t$, is divided into four distinct classes or state variables:
\begin{itemize}
    \item $S_1(t)$: the number of individuals who are not at risk of committing a crime (Law-abiding),
    \item $S_2(t)$: the number of individuals who are at risk of committing a crime (Susceptible),
    \item $C(t)$: the number of individuals who are committing a crime (criminals or offenders), and
    \item $R(t)$: the number of individuals convicted and jailed for the crime (Incarcerated).
\end{itemize}
In such a way that we have the relation
\begin{align*}
    N(t) = S_1(t) + S_2(t) + C(t) + R(t).
\end{align*}

\subsection{Model assumptions}
To control the movement of individuals between the classes, we consider the following assumptions on the model parameters:
\begin{itemize}
    \item The population is recruited at a rate of $\pi$.
    \item Individuals enter the state $S_2(t)$ at a rate of $p\pi$ and the state $S_1(t)$ at a rate of $(1-p)\pi$, where $p\in (0,1)$ is the proportion of individuals entering the susceptible population.
    \item Individuals may move from state $S_2(t)$ to state $S_1(t)$ without committing a crime at a rate of $\varepsilon$ and back to state $S_2(t)$ at a rate of $\theta$. Therefore, the number of individuals at risk of committing a crime may increase due to the population transferring from the state $S_1(t)$.
    \item The rate at which the individual becomes criminal is assumed to follow an \emph{imitation process}, described comprehensively in \cite{Buonomo2}. To this end, we assume that the rate at which individuals join the class $C(t)$ gives initiation function $f(S_2(t), C(t)) = \beta S_2(t) C(t)(1+\alpha C(t))$ that is driven by imitation, with $\beta$ as the effective contact rate and $\alpha$ as the imitation coefficient.
    \item We do not allow the movement of criminal individuals back to the susceptible group $S_2(t)$. Also, it is not realistic to move non-susceptible (or law-abiding) individuals to be incarcerated without committing a crime.
    \item Individuals may move from state $C(t)$ to state $R(t)$ at a rate of $\sigma$, where $\sigma$ is a conviction rate.
    \item Once an individual is convicted and jailed for a crime, an individual can relapse back to the criminal state individuals, $C(t)$, at a rate of $q\gamma$ or may join the state $S_1(t)$ at a rate $(1-q)\gamma$, where $\gamma$ is the relapse rate and $q\in (0,1)$ is the proportion of individuals moving out of the $R(t)$ state to state $C(t)$.
    \item The time spent in the community by individuals has a constant average duration of $1/\mu$, where $\mu$ is the natural mortality rate, which can be caused by illness, suicide, accidental self-injury, execution, or any other unspecified cause.
    \item We assumed that all members of the population mix homogeneously. This implies that each individual has an equal chance of becoming a criminal.
    \item Since we are dealing with a model of human populations, all the model parameters are assumed to be non-negative. A summary of the description of the proposed model parameters is provided in Table~\ref{tab:1}.
\end{itemize}

\begin{table}[ht]
    \centering
    \caption{Description of model parameters of the crime model \eqref{eq:1}}
    \label{tab:1}
    \begin{tabular}{cl}
    \toprule
        \textbf{Parameter} & \textbf{Description}  \\
        \hline
        $\pi$ & Rate of recruitment of individuals \\
        $\mu$ & Natural mortality rate from any class \\
        $\theta$ & Rate of transfer from $S_1(t)$ to $S_2(t)$  \\
        $\epsilon$ & Rate of transfer from $S_2(t)$ to $S_1(t)$  \\
        $\sigma$ & Conviction rate \\
        $\beta$ & Effective contact rate   \\
        $\alpha$ & Imitation coefficient  \\
        $\gamma$ & Release rate from $R(t)$ to $C(t)$ and/or $S_1(t)$ \\
        $q$ & Proportion of relapse from $R(t)$ to $C(t)$ \\
        $p$ & Proportion of recruitment to $S_2(t)$\\
        \bottomrule
    \end{tabular}
\end{table}{}
\begin{figure}[ht!]
\centering
\includegraphics[width=\linewidth]{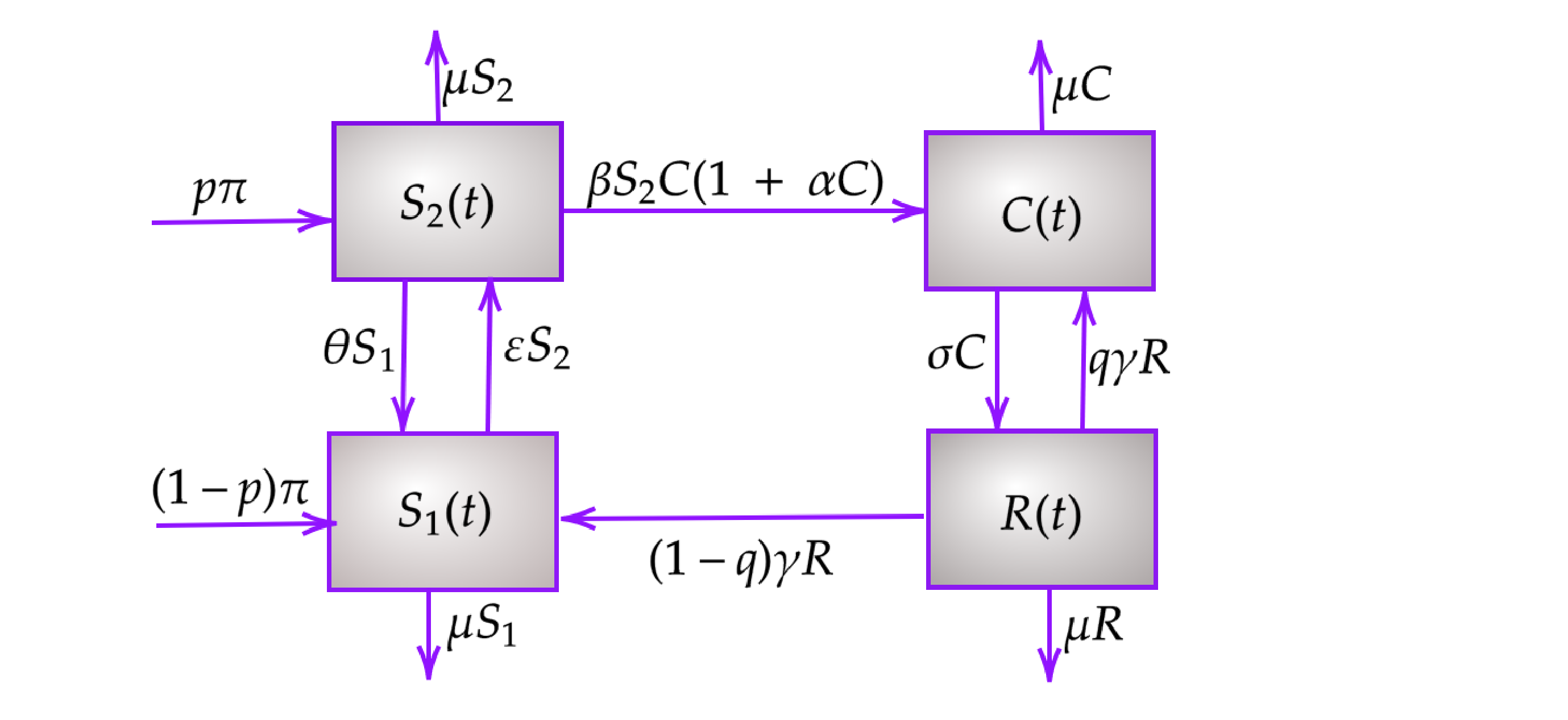}%
\caption{A flow diagram showing the schematic illustration of the model considered in this work.}%
\label{fig:1}%
\end{figure}
%
Based on the flow diagram in Fig.~\ref{fig:1} and the underlying model assumptions, it follows that the following system of nonlinear ordinary differential equations gives the proposed crime model in the presence of imitation: 
\begin{equation}\label{eq:1}
    \begin{split}
    \frac{d S_1}{d t} &= (1-p)\pi + (1-q)\gamma R -(\mu + \theta)S_1+\varepsilon S_2,\\
    \frac{d S_2}{d t} &= p\pi - \beta S_2 C(1+\alpha C)-(\mu +\varepsilon)S_2 +\theta S_1, \\
	\frac{d C}{d t} &= \beta S_2 C(1+\alpha C) +q\gamma R -(\mu +\sigma) C,\\
	\frac{d R}{d t} &= \sigma C - (\mu + \gamma)R,
    \end{split}
\end{equation}
with initial conditions $S_1(0)>0, S_2(0)\geq 0, C(0)\geq 0, R(0)\geq 0$.

\section{Mathematical analysis of the crime model}
\label{sec:analysis}

In this section, our model~(\ref{eq:1}) is qualitatively analyzed to investigate the existence of its equilibria [28] and the control strategies of its dynamical behavior.

We first observe from system~(\ref{eq:1}), by summing up the equations, that
\begin{eqnarray}
    \frac{d N(t)}{d t}= \pi - \mu N(t).\label{eq:N_t_dynamics}
\end{eqnarray} 
It follows that $\frac{d N(t)}{d t} < 0$ if $N(t) > \frac{\pi}{\mu}$. Solving Eq.~(\ref{eq:N_t_dynamics}) with an initial condition $N(0)$, we obtain
\begin{align*}
    N(t) = \left(N(0)-\frac{\pi}{\mu} \right)^{-\mu t} + \frac{\pi}{\mu}.
\end{align*}
It follows that
\[\limsup_{t\to \infty} N(t) \leq \pi/\mu.\]
Hence, the dynamics of system~(\ref{eq:1}) can be studied in the following feasible region:
\begin{align*}
\Omega = \big\{(S_1,S_2,C,R)\in \mathbb{R}_{+}^{4}:0\leq S_1+S_2+C+R \leq \frac{\pi}{\mu} \big\}
\end{align*}
which is \emph{positively invariant} with respect to the flow induced by system~\ref{eq:1}. Hence, our crime model~(\ref{eq:1}) is \enquote{well-posed} mathematically and epidemiologically \cite{Hethcote2000}. Hence, it is sufficient to study the dynamics of the crime model in $\Omega$.

\subsection{Existence of equilibrium solutions}
\label{subsec:equilibria}

In this subsection, we discuss the existence of \emph{equilibrium solutions} of system~(\ref{eq:1}), which are the time-constant solutions. A quadruple will represent a generic equilibrium
\begin{align}
    \Bar{E} = (\Bar{S}_1, \Bar{S}_2, \Bar{C}, \Bar{R}).
\end{align}
In analogy with the term \enquote{\emph{disease-free} equilibrium} of infectious disease models, we call a quadruple equilibrium solution of the form $E^0=(S_{1}^{0}, S_{2}^{0}, 0, 0)$ the \emph{crime-free} equilibrium of system~(\ref{eq:1}). In that case, the component of the criminality class $C(t)=0$ for all time $t$, and the entire population will comprise only susceptible individuals. Equilibrium solutions, or equilibria, $E^*=(S_1^*, S_2^*, C^*, R^*)$, with positive crime dynamics component ($C^* >0$), will be called \emph{endemic} equilibria.
\begin{theorem}[Existence of crime-free equilibrium]
The model~(\ref{eq:1}) has a crime-free equilibrium state, $E^0$, at the point
\begin{eqnarray*}
    E^0=\left(\frac{\pi((1-p)\mu + \varepsilon)}{\mu (\mu +\varepsilon +\theta)},\frac{\pi(p\mu +\theta)}{\mu (\mu +\varepsilon +\theta)} ,0,0\right).
\end{eqnarray*}
\end{theorem}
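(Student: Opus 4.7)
The plan is to find the equilibrium by substituting the crime-free ansatz $C=R=0$ directly into the right-hand sides of system~\eqref{eq:1} and solving the resulting reduced system for $S_1$ and $S_2$. The key observation is that the $C$ and $R$ equations impose no constraint once $C=R=0$: the fourth equation $\sigma C-(\mu+\gamma)R=0$ is satisfied trivially, and the third equation $\beta S_2 C(1+\alpha C)+q\gamma R-(\mu+\sigma)C=0$ vanishes identically because every term contains a factor of $C$ or $R$. So the determination of $E^0$ reduces to the $S_1,S_2$ subsystem.

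Next I would set the right-hand sides of the first two equations in~\eqref{eq:1} to zero (with $R=0$ and $C=0$) to obtain the linear system
\begin{align*}
(\mu+\theta)S_1-\varepsilon S_2 &= (1-p)\pi,\\
-\theta S_1+(\mu+\varepsilon)S_2 &= p\pi.
\end{align*}
I would then solve this $2\times 2$ linear system by Cramer's rule. The coefficient determinant is
\[
(\mu+\theta)(\mu+\varepsilon)-\varepsilon\theta=\mu(\mu+\varepsilon+\theta),
\]
which is strictly positive because all parameters are non-negative and $\mu>0$, so the system has a unique solution. A short algebraic simplification of the numerators yields exactly
\[
S_1^0=\frac{\pi((1-p)\mu+\varepsilon)}{\mu(\mu+\varepsilon+\theta)},\qquad S_2^0=\frac{\pi(p\mu+\theta)}{\mu(\mu+\varepsilon+\theta)},
\]
which, together with $C^0=R^0=0$, gives the claimed $E^0$.

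Finally I would note that $E^0$ lies in the feasible region $\Omega$: both components are non-negative since $p\in(0,1)$ and all parameters are non-negative, and one can verify $S_1^0+S_2^0=\pi/\mu$, so the total population at $E^0$ equals the carrying capacity $\pi/\mu$ established via Eq.~\eqref{eq:N_t_dynamics}. There is no substantive obstacle here; the result is essentially a linear algebra exercise, and the only care required is in the arithmetic simplification of the Cramer-rule numerators, which telescope cleanly because the $\varepsilon p$ and $\theta p$ cross-terms cancel as shown above.
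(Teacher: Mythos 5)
Your proposal is correct and follows essentially the same route as the paper: impose the crime-free ansatz $C=R=0$, observe that the $C$- and $R$-equations are trivially satisfied, and solve the remaining $2\times 2$ linear system for $S_1,S_2$. In fact you supply the algebra (Cramer's rule, the determinant $\mu(\mu+\varepsilon+\theta)$, and the check $S_1^0+S_2^0=\pi/\mu$) that the paper compresses into ``after some manipulations,'' so your write-up is, if anything, more complete.
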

\begin{proof}
    At the equilibrium state, the right-hand side of the system \eqref{eq:1} is set to zero. That is, if $\Bar{E} =(\Bar{S}_{1},\Bar{S}_{2},\Bar{C},\Bar{R})$ is an equilibrium state of the model \eqref{eq:1}, then we must have
\begin{eqnarray}
0 &=& (1-p)\pi + (1-q)\gamma \Bar{R} -(\mu + \theta)\Bar{S}_{1} +\varepsilon \Bar{S}_{2}, \label{eq-3a}\\
	0 &=& p\pi - \beta \Bar{S}_{2} \Bar{C}(1+\alpha \Bar{C})-(\mu +\varepsilon)\Bar{S}_{2} +\theta \Bar{S}_{1}, \label{eq-3b}\\
	0 &=& \beta \Bar{S}_{2} \Bar{C}(1+\alpha \Bar{C}) +q\gamma \Bar{R} -(\mu +\sigma) \Bar{C}, \label{eq-3c}\\
	0 &=& \sigma \Bar{C} - (\mu + \gamma)\Bar{R}.\label{eq-3d}.
\end{eqnarray}
From \eqref{eq-3c} and \eqref{eq-3d}, one obtains
\[\Bar{C}=0 \quad \text{or}\quad \beta \Bar{S}_{2}(1+\alpha \Bar{C}) +\frac{q\gamma\sigma}{\mu + \gamma} -(\mu  +\sigma)=0.\]
But $\Bar{C}=0$ at a crime-free equilibrium state. Hence, after some manipulations, our model exhibits a crime-free equilibrium state at $E^0=\left(\frac{\pi((1-p)\mu + \varepsilon)}{\mu (\mu +\varepsilon +\theta)},\frac{\pi(p\mu +\theta)}{\mu (\mu +\varepsilon +\theta)},0,0\right)$.
\end{proof}
Motivated by the epidemiological models of infectious diseases (see, e.g., \cite{vanden}), our model is analyzed in terms of the basic reproduction number, $\mathcal{R}_0$--\enquote{the number of secondary infections caused by one infectious individual}, to show how individual involvement in a criminal act could be controlled to reduce the likelihood of an individual engaging in a criminal career. The basic reproductive number $\mathcal{R}_0$ of the model \eqref{eq:1} may be evaluated as the spectral radius of the so-called next-generation matrix \cite{vanden,Diekmann,5e095723cfcd4b7ba7088a8463fea99f}. Because we are concerned with the individuals that spread criminality activity, we can only consider the states $C$ and $R$ to obtain the \enquote{next-generation matrix} of our proposed crime model. To this end, we obtain
\begin{eqnarray}
    \mathcal{R}_0=\frac{\beta\pi(p\mu+\theta)(\mu + \gamma)}{\mu(\mu+\theta+\varepsilon)\Lambda},\label{eq:2}
\end{eqnarray}
where
\begin{eqnarray}\label{eq:3}
    \Lambda = (\mu+\gamma)(\mu+\sigma) - q\gamma \sigma.
\end{eqnarray}
Define the following quantities:
\begin{eqnarray}
    \alpha^{*} &:=& \frac{(\mu+\gamma)(\mu+\sigma)-\gamma\sigma(q\mu+\theta)}{\pi(\mu+\gamma)(p\mu+\theta)},\nonumber \\
    \mathcal{R}_{0}^c &:=& 1-\frac{\beta(\alpha - \alpha^*)^2}{4\alpha\Phi}, \label{eq:4}
\end{eqnarray}
where
\begin{equation}
\Phi=\left[\mu^2(\mu+\theta+\sigma+\gamma)+\mu\theta(\sigma+\gamma)+(1-q)\gamma\sigma\mu\right]\mu(\mu+\theta+\varepsilon)\Lambda. \label{eq:5}
\end{equation}
In the following theorem, based on the basic reproduction number $\mathcal{R}_0$, we investigate the existence of endemic equilibria of our model \eqref{eq:1}.

\begin{theorem}\label{thm:endemic}[Existence of endemic equilibrium]
The following statements hold.
\begin{itemize}
    \item[(i)] If $\mathcal{R}_0>1$, then model \eqref{eq:1} has a unique endemic equilibrium.
    \item[(ii)] If $\mathcal{R}_0<1$ and $\alpha <\alpha^{*}$, then model \eqref{eq:1} has no endemic equilibrium.
    \item[(iii)] If $\mathcal{R}_{0}^c<\mathcal{R}_0<1$ and $\alpha >\alpha^{*}$, then model \eqref{eq:1} has two endemic equilibra.
    \item[(iv)] If $\mathcal{R}_0<\mathcal{R}_{0}^c$ and $\alpha >\alpha^{*}$, then model \eqref{eq:1} has no endemic equilibrium.
\end{itemize}
\end{theorem}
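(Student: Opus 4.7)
The plan is to reduce the endemic equilibrium problem to a single quadratic equation in the criminal component $\bar{C}$ and then do a case analysis on its coefficients. Starting from \eqref{eq-3d}, I would express $\bar{R}=\sigma\bar{C}/(\mu+\gamma)$. At an endemic equilibrium $\bar{C}>0$, so \eqref{eq-3c} can be divided through by $\bar{C}$, giving $\beta\bar{S}_2(1+\alpha\bar{C})=\mu+\sigma-q\gamma\sigma/(\mu+\gamma)=\Lambda/(\mu+\gamma)$, and hence
\begin{equation*}
\bar{S}_2=\frac{\Lambda}{\beta(\mu+\gamma)(1+\alpha\bar{C})}.
\end{equation*}
Because $\frac{dN}{dt}=\pi-\mu N$ is independent of the nonlinear terms, at any equilibrium $\bar{S}_1+\bar{S}_2+\bar{C}+\bar{R}=\pi/\mu$, which gives $\bar{S}_1$ in terms of $\bar{C}$ alone. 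Substituting these three expressions into \eqref{eq-3a} and clearing the $(1+\alpha\bar{C})$ denominator produces a quadratic
\begin{equation*}
f(\bar{C})\;=\;A\,\bar{C}^{2}+B\,\bar{C}+C\;=\;0,
\end{equation*}
whose coefficients depend only on the model parameters.

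The next step is to identify the signs of $A$, $B$, $C$ in terms of the thresholds in the statement. A direct computation shows $A=\alpha\beta\bigl[(1-q)\gamma\sigma+(\mu+\theta)(\mu+\gamma+\sigma)\bigr]>0$, while using the definition \eqref{eq:2} of $\mathcal{R}_0$ one obtains $C=\Lambda(\mu+\theta+\varepsilon)(1-\mathcal{R}_0)$, so that $\mathrm{sgn}(C)=\mathrm{sgn}(1-\mathcal{R}_0)$. The middle coefficient $B$ is linear and decreasing in $\alpha$, with the critical value at which $B=0$ being precisely $\alpha^{*}$ as defined in \eqref{eq:4}: thus $B>0$ when $\alpha<\alpha^{*}$ and $B<0$ when $\alpha>\alpha^{*}$. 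Case (i) then follows from Descartes' rule of signs: when $\mathcal{R}_0>1$ one has $A>0$, $C<0$, so $f$ has exactly one positive and one negative real root regardless of the sign of $B$. Case (ii) is also immediate: all three coefficients are positive, so $f$ has no positive root.

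For cases (iii) and (iv), the coefficients satisfy $A>0$, $B<0$, $C>0$, so by Descartes $f$ has either two positive real roots or no real roots, the distinction being governed by the discriminant $\Delta=B^{2}-4AC$. The plan is then to rewrite $\Delta$ in the convenient completed-square form
\begin{equation*}
\Delta\;=\;4\alpha\,\Phi\!\left[\bigl(\mathcal{R}_0-1\bigr)+\frac{\beta(\alpha-\alpha^{*})^{2}}{4\alpha\,\Phi}\right]
\end{equation*}
with $\Phi$ as in \eqref{eq:5}, which identifies the threshold $\mathcal{R}_0^{c}$ of \eqref{eq:4} as the value of $\mathcal{R}_0$ at which $\Delta=0$. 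Then $\Delta>0\iff\mathcal{R}_0>\mathcal{R}_0^{c}$ yields (iii), and $\Delta<0\iff\mathcal{R}_0<\mathcal{R}_0^{c}$ yields (iv). Finally one should verify that the two positive roots of $f$ in case (iii) actually give admissible equilibria, i.e.\ yield $\bar{S}_1,\bar{S}_2\ge 0$ within $\Omega$; this is automatic from the explicit expressions above, since $\bar{S}_2>0$ and $\bar{R}>0$ for $\bar{C}>0$, and the conservation law forces $\bar{S}_1\ge 0$ on the range of $\bar{C}$ compatible with $N=\pi/\mu$.

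The routine work is the substitution producing $A,B,C$; the main obstacle is the algebraic identification of the discriminant with the expression involving $\alpha^{*}$ and $\Phi$. Ordering the computation by first collecting the coefficients of $(1+\alpha\bar{C})^{-1}$ before clearing denominators and then using the identity $\pi\beta(\mu+\gamma)(p\mu+\theta)/\mu=\mathcal{R}_0(\mu+\theta+\varepsilon)\Lambda$ to trade the parameter $\beta$ for $\mathcal{R}_0$ keeps that step tractable and produces the completed square directly.
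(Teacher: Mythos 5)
Your proposal follows essentially the same route as the paper's own proof: reduce the equilibrium conditions to a quadratic in $\bar C$, classify the signs of its coefficients via $\mathcal{R}_0 \gtrless 1$ and $\alpha \gtrless \alpha^{*}$, apply Descartes' rule of signs for (i)--(ii), and settle (iii)--(iv) by the sign of the discriminant, with $\mathcal{R}_{0}^{c}$ characterized by its vanishing. The only differences are cosmetic --- you normalize the leading coefficient to be positive, recover $\bar S_1$ from the conservation law $N=\pi/\mu$ rather than solving \eqref{eq-3a} directly, and your exact constants (e.g.\ the missing factor $\mu$ in the constant term and the prefactor of $(\alpha-\alpha^{*})^{2}$ in the discriminant) are loose in the same way the paper's own definition \eqref{eq:4} is --- so this is the paper's argument in all essentials.
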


\begin{proof}
At an endemic equilibrium state, crime always exists.
It can be shown that the components of the generic equilibrium of model \eqref{eq:1} are given by
\begin{align*}
    S_{1}^* &=\frac{\varepsilon \Lambda + D_s}{(\mu+\theta)(\mu+\gamma)\beta(1+\alpha C^{*})},\\
    S_{2}^*&=\frac{\Lambda}{(\mu + \gamma)\beta(1+\alpha C^{*})},\\
    R^{*} &= \frac{\sigma C^{*}}{\mu + \gamma},
\end{align*}
where we have used that
\[D_s = (1-p)(\mu+\gamma)\pi\beta(1+\alpha C^{*})+(1-q)\gamma\sigma\beta C^{*}(1+\alpha C^{*})\]
and $C^*$ is a positive solution of the quadratic equation
\begin{align}\label{eqx}
b_2(C^{*})^2+b_1C^{*}+b_0=0,
\end{align}
where
\begin{align*}
b_0&=\mu(\mu+\theta+\varepsilon)\Lambda(\mathcal{R}_0-1),\\
b_1&=\beta\pi(\mu+\gamma)(p\mu+\theta)(\alpha - \alpha^*),\\ 
b_2&=-\beta\alpha[\mu^2(\mu+\theta+\sigma+\gamma)+\mu\theta(\sigma+\gamma)+(1-q)\gamma\sigma\mu].
\end{align*}
It turns out that
\begin{align*}
    b_2 <0, \quad b_1 >0 \iff \alpha > \alpha^*, \quad b_0 >0 \iff \mathcal{R}_0 > 1.
\end{align*}
Moreover, the critical threshold $\mathcal{R}_0^c$ satisfies the discriminant $\Delta:= b_1^2 - 4b_0 b_2 = b_{1}^2+4\beta\alpha\Phi(\mathcal{R}_0-1)=0$. It follows that $\Delta >0 \iff \mathcal{R}_0>\mathcal{R}_{0}^c$. By the Descartes rule of signs, 
\begin{itemize}
\item if $\mathcal{R}_0>1$, i.e., $b_0>0$, then \eqref{eqx} has a single positive solution $C^*$,
\item if $\mathcal{R}_0<1$, i.e., $b_0<0$ and $b_1<0$, then \eqref{eqx} has no positive solution.
\item if $\mathcal{R}_0<\mathcal{R}_{0}^c$ and and $\alpha >\alpha^{*}$, then \eqref{eqx} has no positive solution,
\item if $\mathcal{R}_{0}^c<\mathcal{R}_0<1$ and $\alpha >\alpha^{*}$, i.e., $b_1>0$, then \eqref{eqx} has two positive solutions, say $C_1^*$ and $C_2^*$.
\end{itemize}
Hence, the statements in (i), (ii), (iii), and (iv) hold.

\end{proof}

\subsection{Local stability of the crime-free equilibrium}
\label{subsec:loc-stability}

In the following theorem, we show the local stability of the crime-free equilibrium of system \eqref{eq:1}.
\begin{theorem}\label{thm:cfe}[Local stability of the crime-free equilibrium]
The crime-free equilibrium $E^0$ of system \eqref{eq:1} is locally asymptotically stable if $\mathcal{R}_0< 1$, and it is unstable if $\mathcal{R}_0>1$.
\end{theorem}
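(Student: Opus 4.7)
The plan is to linearize the system about $E^0$ and analyze the spectrum of the Jacobian, exploiting the block-triangular structure that arises because both $C$ and $R$ vanish at $E^0$. Since the theorem references the next-generation matrix construction already used to derive $\mathcal{R}_0$, the conclusion is in fact an instance of the general van den Driessche--Watmough criterion \cite{vanden}; however, a direct computation is short, so I will carry it out rather than merely invoking that result.

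First I would evaluate the Jacobian $J(E^0)$ of the right-hand side of \eqref{eq:1}. Because every occurrence of $C$ or $R$ in the $S_1$- and $S_2$-equations multiplies a factor that either vanishes at $C=0,R=0$ or is linear in $R$, the derivatives of $f_3,f_4$ with respect to $S_1,S_2$ are zero, while the $C,R$-dependence in $f_1,f_2$ reduces to the constant coefficients $(1-q)\gamma$ and $-\beta S_2^0$. Thus $J(E^0)$ is block lower-triangular:
\begin{equation*}
J(E^0)=\begin{pmatrix} A & B \\ 0 & M \end{pmatrix},\qquad
A=\begin{pmatrix} -(\mu+\theta) & \varepsilon \\ \theta & -(\mu+\varepsilon) \end{pmatrix},\quad
M=\begin{pmatrix} \beta S_2^0-(\mu+\sigma) & q\gamma \\ \sigma & -(\mu+\gamma) \end{pmatrix}.
\end{equation*}
The spectrum of $J(E^0)$ is the union of $\mathrm{spec}(A)$ and $\mathrm{spec}(M)$, so it suffices to analyze these two $2\times2$ blocks via the Routh--Hurwitz (trace/determinant) conditions.

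For the $A$-block a short calculation gives $\mathrm{tr}(A)=-(2\mu+\theta+\varepsilon)<0$ and $\det(A)=\mu(\mu+\theta+\varepsilon)>0$, so both eigenvalues have strictly negative real parts unconditionally. For the $M$-block the key identity is
\begin{equation*}
\det(M)=(\mu+\gamma)(\mu+\sigma)-q\gamma\sigma-(\mu+\gamma)\beta S_2^0=\Lambda\bigl(1-\mathcal{R}_0\bigr),
\end{equation*}
where I use $S_2^0=\pi(p\mu+\theta)/[\mu(\mu+\varepsilon+\theta)]$ and the definition \eqref{eq:2}, together with $\Lambda>0$ (which is $\mathrm{tr}(A)$-type positivity for the next-generation block). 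Hence $\det(M)>0$ iff $\mathcal{R}_0<1$. I would then check the trace: when $\mathcal{R}_0<1$ one has $\beta S_2^0<\Lambda/(\mu+\gamma)\leq\mu+\sigma$, so $\mathrm{tr}(M)=\beta S_2^0-(\mu+\sigma)-(\mu+\gamma)<-(\mu+\gamma)<0$. Both Routh--Hurwitz conditions hold, giving local asymptotic stability of $E^0$ when $\mathcal{R}_0<1$. Conversely, if $\mathcal{R}_0>1$ then $\det(M)<0$, forcing $M$ to have two real eigenvalues of opposite sign; the positive one lies in $\mathrm{spec}(J(E^0))$, so $E^0$ is unstable.

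I do not anticipate a genuine obstacle: the block-triangular structure reduces everything to a pair of $2\times2$ spectra, and the only calculation that requires care is the algebraic identification of $\det(M)$ with $\Lambda(1-\mathcal{R}_0)$, which is precisely the reason the next-generation construction produces $\mathcal{R}_0$ in the form \eqref{eq:2}. The one point worth double-checking is the sign of $\Lambda$; this is immediate because $\Lambda=(\mu+\gamma)(\mu+\sigma)-q\gamma\sigma\geq(\mu+\gamma)(\mu+\sigma)-\gamma\sigma=\mu(\mu+\gamma+\sigma)+\mu^2>0$ using $q\in(0,1)$, so no additional hypothesis is needed.
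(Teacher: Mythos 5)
Your proposal is correct and follows essentially the same route as the paper: linearize at $E^0$, exploit the block-triangular structure of the Jacobian, and analyze the spectra of the same two $2\times 2$ blocks, with your key identity $\det(M)=\Lambda(1-\mathcal{R}_0)$ being exactly the paper's coefficient $a_2=(1-\mathcal{R}_0)\Lambda$ in its second characteristic quadratic. The only cosmetic differences are that you use trace/determinant (Routh--Hurwitz) conditions rather than the quadratic formula, and your instability argument (negative determinant forces a positive real eigenvalue) is stated somewhat more explicitly than the paper's; your minor slip in simplifying $(\mu+\gamma)(\mu+\sigma)-\gamma\sigma$ (it equals $\mu(\mu+\gamma+\sigma)$, without the extra $\mu^2$) does not affect the positivity of $\Lambda$ or the conclusion.
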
{}
\begin{proof}
The Jacobian matrix of system \eqref{eq:1} evaluated at the crime-free equilibrium $E^0$, denoted by $\bm{J}(E^0)=: \bm{J}$ is given by
\begin{align*}
\bm{J} &=\begin{pmatrix}
-(\mu+\theta) &\varepsilon &0 &(1-q)\gamma\\
\theta & -(\mu+\varepsilon) & -\frac{\mathcal{R}_0\Lambda}{\mu+\gamma} & 0\\
0 & 0 & \frac{(\mathcal{R}_0 - 1)\Lambda - q\gamma\sigma}{\mu+\gamma} & q\gamma\\
0 & 0 & \sigma & -(\mu+\gamma)
\end{pmatrix}\\
&= \begin{pmatrix}
    \bm{X} & \bm{Z} \\
    \bm{0} & \bm{Y}
\end{pmatrix}.
\end{align*}
Here $\bm{X},\bm Y, \bm Z$, and $\bm 0$ are $2\times 2$ block submatrices of the Jacobian $\bm J$, given by
\begin{eqnarray*}
    \bm{X} &=& \begin{pmatrix}
        -(\mu+\theta) &\varepsilon \\
        \theta & -(\mu+\varepsilon)
    \end{pmatrix},\; \bm{Y} = \begin{pmatrix}
        0 &(1-q)\gamma\\
        -\frac{\mathcal{R}_0\Lambda}{\mu+\gamma} & 0
    \end{pmatrix},\\ \bm Z &=& \begin{pmatrix}
        \frac{(\mathcal{R}_0 - 1)\Lambda - q\gamma\sigma}{\mu+\gamma} & q\gamma\\
        \sigma & -(\mu+\gamma)
    \end{pmatrix}, \; \bm 0 = \begin{pmatrix}
        0 & 0\\ 0 & 0
    \end{pmatrix}.
\end{eqnarray*}
Hence, the Jacobian $\bm J$ is a block upper triangular matrix. Therefore, the eigenvalues denoted $\lambda$, of $\bm J$ equal the eigenvalues of the submatrices $\bm X$ and $\bm Y$. In particular, the eigenvalues of $\bm J$ are solutions of the two quadratic equations:
\begin{align*}
    &\lambda^2 +(2\mu+\theta+\varepsilon)\lambda+\mu(\mu+\theta+\varepsilon)=0 ,\; \text{and} \\
    &\lambda^2 + a_1\lambda + a_2 =0,
\end{align*}
where
\begin{align*}
    a_1 &= \frac{(\mu+\gamma)^2 + q\gamma\sigma + (1-\mathcal{R}_0)\Lambda}{\mu+\gamma},\\
    a_2 &= (1-\mathcal{R}_0)\Lambda.
\end{align*}
Using the quadratic formula, the eigenvalues of the first equation are given by
\begin{align*}
\lambda_1, \lambda_2= \frac{1}{2}\left(-(2\mu+\theta+\varepsilon)\pm \sqrt{(\theta+\varepsilon)^2-2\theta\varepsilon} \right).
\end{align*}
Since $\theta+\varepsilon \geq \sqrt{(\theta+\varepsilon)^2-2\theta\varepsilon} = \sqrt{\theta^2 + \varepsilon^2}$, it follows that $\lambda_1<0$ and $\lambda_2 <0$. Similarly, the eigenvalues of the second equation are given by
\begin{align*}
\lambda_3, \lambda_4= \frac{1}{2}\left(-a_1\pm \sqrt{a_{1}^{2}-4a_2} \right).
\end{align*}
Notice that, $a_2 > 0 \iff \mathcal{R}_0 < 1$. So, if $a_2 >0$, then $\sqrt{a_1^2 - 4a_2} \leq \sqrt{a_1^2}=|a_1|$. On the other hand, if $a_2<0$, then $\sqrt{a_1^2 - 4a_2} \geq \sqrt{a_1^2}=|a_1|$. Hence, it can be easily seen that both eigenvalues $\lambda_3, \lambda_4$ are negative only if $a_1>0$ and $a_2>0$. Thus, all the eigenvalues $\lambda_i$ of $J(E^0)$ have negative real parts only if $\mathcal{R}_0 < 1$. By \cite{Hale}, the crime-free equilibrium $E^0$ is locally asymptotically stable if $\mathcal{R}_0 < 1$.
\end{proof}

\subsection{Bifurcation analysis}
\label{sec:bifurcation}

Theorem~\ref{thm:cfe} shows that $\mathcal{R}_0 = 1$ is a bifurcation value. The crime-free equilibrium changes its stability properties in a neighborhood of $\mathcal{R}_0 =1$. On the other hand, Theorem~\ref{thm:endemic} shows that system \eqref{eq:1} exhibits a backward bifurcation, that is, when $\alpha > \alpha^*$ and $\mathcal{R}_{0}^c<\mathcal{R}_0<1$ system \eqref{eq:1} exhibits two endemic equilibria (one locally stable and one unstable) together with a locally asymptotically stable crime-free equilibrium (see Fig.~\ref{fig:bfr}). This shows that it is not enough to lower the basic reproduction number below unity to eradicate criminality in the population.

As in \cite{Buonomo2}, the existence of backward bifurcation of system \eqref{eq:1} can also be verified using Theorem~4.1 of the approach in \cite{Castilo-Chavez:2004}, which is based on the \emph{center manifold theory} \cite{GuckeHolmes83}.

The sensitivity analysis, which we will illustrate in the following subsection, shows that $\beta$ is one of the most sensitive parameters. Let us choose $\beta$ as a bifurcation parameter. We see that
\begin{align}\label{eq:bifparameter}
    \mathcal{R}_0 = 1 \iff \beta=\beta^* := \frac{\mu(\mu+\theta+\varepsilon)\Lambda}{\pi(p\mu+\theta)(\mu+\gamma)}.
\end{align}
Linearization of system \eqref{eq:1} evaluated at the crime-free equilibrium, $E^0$, and plugging $\beta=\beta^*$, gives the Jacobian matrix
\begin{align*}
    \bm J^*(E^0, \beta^*) = \begin{pmatrix}
-(\mu+\theta) & \varepsilon  &  0  &  (1-q)\gamma\\
\theta  & -(\mu+\varepsilon) & \frac{ \Lambda}{\mu + \gamma}  &  0\\
0  &  0  &  -\frac{q\gamma \sigma}{\mu + \gamma} & q\gamma\\
0  &  0  &\sigma   &  -(\mu+\gamma)
\end{pmatrix}.
\end{align*}{}
The eigenvalues of $\bm J^*(E^0, \beta^*)$ are 
\begin{align*}
    &\lambda_1 = 0, \; \lambda_2 = -\frac{(\mu+\gamma)^2+q\gamma\sigma}{\mu+\gamma} <0,\\ 
    &\lambda_3,\lambda_4 = \frac{-(2\mu+\theta+\varepsilon)\pm \sqrt{(\theta+\varepsilon)^2-2\theta\varepsilon}}{2}.
\end{align*}
Since $\theta+\varepsilon \geq \sqrt{(\theta+\varepsilon)^2-2\theta\varepsilon}$, we see that the eigenvalues $\lambda_3 <0$ and $\lambda_4 < 0$ are negative. Moreover, $\lambda_1=0$ is a simple eigenvalue. Hence, when $\mathcal{R}_0=1$, the crime-free equilibrium $E^0$ is a non-hyperbolic equilibrium. Applying the center manifold theory \cite{GuckeHolmes83}, we analyse the dynamics of system \eqref{eq:1} near $\beta=\beta^*$ (i.e. $\mathcal{R}_0=1$). Let $\bm{v}=(v_1,v_2,v_3,v_4)$ and $\bm{w}=(w_1,w_2,w_3,w_4)^T$, respectively, denote the left and right eigenvectors associated with the zero eigenvalue $\lambda=0$, satisfying $\bm{v}\cdot\bm{w}=1$. After some algebraic manipulations, we can show that 
\begin{equation*}
    \bm{w} = \begin{pmatrix}
        \frac{(1-q)\gamma\sigma(\mu + \varepsilon) + \varepsilon\Lambda}{\sigma\mu(\mu + \theta + \varepsilon)}\\
        \frac{(1-q)\gamma\sigma\theta + \Lambda(\mu + \theta)}{\sigma\mu(\mu + \theta +\varepsilon)} \\
        \frac{\mu + \gamma}{\sigma} \\
        1
    \end{pmatrix} \; \text{and}\; \bm{v} = \begin{pmatrix}
         0\\ 0\\ \frac{\mu + \gamma}{q\gamma}\\ 1 
    \end{pmatrix}^T
\end{equation*}
Now, let $\bm{x}=(x_1,x_2,x_3,x_4)^T$ denotes the state vector of the system \eqref{eq:1} with $x_1 =S_1, x_2 =S_2, x_3 = C$, and $x_4 =R$, and $g_i$ denote the right-hand side of the dynamics of $x_i$, for each class $i=1,\dots,4$. Then the system \eqref{eq:1} comes into
\begin{align}\label{eq:dyn}
    \frac{d x_i}{d t} = g_i(\bm x),\quad i\in \{1,\dots,4\},
\end{align}
where
\begin{align*}
    g_1(\bm x) &= (1-p)\pi + (1-q)\gamma x_4 -(\mu + \theta)x_1+\varepsilon x_2,\\
    g_2(\bm x) &= p\pi - \beta x_2 x_3(1+\alpha x_3)-(\mu +\varepsilon)x_2 +\theta x_1,\\
    g_3(\bm x) &= \beta x_2 x_3(1+\alpha x_3) +q\gamma x_4 -(\mu +\sigma) x_3,\\
    g_4(\bm x) &= \sigma x_3 - (\mu + \gamma)x_4.
\end{align*}
According to \cite{Castilo-Chavez:2004,Buonomo2,Martina2018}, \enquote{the coefficients $a$ and $b$ of the normal form representing the system dynamics} \eqref{eq:dyn} on the center manifold are given by
\begin{align}\label{eq:a-and-b}
    \begin{split}
        a &= \sum_{k,i,j}^{4} v_k w_i w_j \frac{\partial^2 g_k}{\partial x_i \partial x_j}(E^0,\beta^*),\\
       b &= \sum_{k,i,j}^{4} v_k w_i \frac{\partial^2 g_k}{\partial x_i \partial \beta}(E^0,\beta^*).
    \end{split}
\end{align}
where each second partial derivatives $\frac{\partial^2 g_k}{\partial x_i \partial x_j}(E^0,\beta^*)$ and $\frac{\partial^2 g_k}{\partial x_i \partial \beta}(E^0,\beta^*)$ are evaluated at the crime-free equilibrium $E^0$ and at the point $\beta=\beta^*$. For the sake of simplicity, we can omit the arguments, $E^0, \beta^*$, in the partial derivatives, and after some algebraic calculations, we obtain that
\begin{align*}
    \frac{\partial^2 g_2}{\partial x_2 \partial x_3} &= -\beta^*, \; \frac{\partial^2 g_2}{\partial x_3^2} = -2\beta^*\alpha \frac{p\pi \mu +\theta\pi}{\mu (\mu +\varepsilon +\theta)}, \\
    \frac{\partial^2 g_3}{\partial x_2 \partial x_3} &= \beta^*, \; \frac{\partial^2 g_3}{\partial x_3^2} = 2\beta^*\alpha \frac{p\pi \mu +\theta\pi}{\mu (\mu +\varepsilon +\theta)}, \\
    \frac{\partial^2 g_2}{\partial x_3 \partial \beta^*} &= -\frac{p\pi \mu +\theta\pi}{\mu (\mu +\varepsilon +\theta)} , \; \frac{\partial^2 g_3}{\partial x_3 \partial \beta^*} = \frac{p\pi \mu +\theta\pi}{\mu (\mu +\varepsilon +\theta)},
\end{align*}
and all the remaining second partials are zero. Substituting these values into the expressions of $a$ and $b$ in eq.~\eqref{eq:a-and-b}, we obtain
\begin{align*}
    a &= 2\frac{(\mu(\mu + \sigma + \gamma)+(1-q)\gamma\sigma)(\mu + \gamma)\mu}{\pi(p\mu + \theta)((\mu +\gamma)^2 + q\gamma\sigma)}\bm{\Psi} > 0,
    \\
    b &= \frac{(\mu + \gamma)^2\pi(p\mu + \theta)}{((\mu + \gamma)^2 + q\gamma\sigma)\mu(\mu + \theta + \varepsilon)} > 0,
\end{align*}
where we have used that
\begin{equation*}
    \bm{\Psi}=(1-q)\gamma\sigma(\mu + 2\theta) + (\mu + \theta)\mu(\mu + \sigma + \gamma) + (\mu + \gamma)\mu\alpha\pi(p\mu + \theta).
\end{equation*}
All the required hypotheses of Theorem~4.1 \cite{Castilo-Chavez:2004} are satisfied. Hence, we established the following result.
\begin{theorem}\label{thm:bifurcation}
The crime model \eqref{eq:1} exhibits a backward bifurcation when $\mathcal{R}_0=1$.
\end{theorem}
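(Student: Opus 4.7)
The natural approach is to invoke Theorem~4.1 of \cite{Castilo-Chavez:2004}, which determines the direction of the transcritical bifurcation at $\mathcal{R}_0 = 1$ from the signs of the two normal-form coefficients $a$ and $b$ on the one-dimensional center manifold. Because the equivalence $\mathcal{R}_0 = 1 \iff \beta = \beta^*$ is already recorded in~\eqref{eq:bifparameter}, taking $\beta$ as the bifurcation parameter is a legitimate substitute for bifurcating in $\mathcal{R}_0$, so the remainder of the argument is essentially algebraic.

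First I would confirm the spectral hypothesis that at $(E^0,\beta^*)$ the Jacobian $\bm J^*(E^0,\beta^*)$ admits a simple zero eigenvalue while the other three eigenvalues have negative real parts. This is already done in the preamble via the block-triangular decomposition borrowed from the proof of Theorem~\ref{thm:cfe}, so $E^0$ is non-hyperbolic at $\beta^*$ and the center manifold theorem of \cite{GuckeHolmes83} applies.

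Next I would build the left and right null vectors $\bm v, \bm w$ of $\bm J^*(E^0,\beta^*)$ with $\bm v \cdot \bm w = 1$. The block structure---whose upper-left block $\bm X$ is non-singular---forces $v_1 = v_2 = 0$, so the double sums in~\eqref{eq:a-and-b} collapse to derivatives of $g_3$ alone (recall $g_4$ is linear, so all its second partials vanish). Substituting the surviving second partials $\partial^2 g_3/\partial x_2 \partial x_3 = \beta^*$, $\partial^2 g_3/\partial x_3^2 = 2\beta^*\alpha S_2^0$, and $\partial^2 g_3/\partial x_3 \partial \beta = S_2^0$ into~\eqref{eq:a-and-b} reduces $a$ and $b$ to sums of manifestly non-negative terms, yielding the closed forms displayed in the excerpt.

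The most delicate step, and the main bookkeeping burden, is extracting the composite factor $\bm \Psi$ inside $a$ and verifying that it is strictly positive. Its three summands correspond to the bilinear $S_2 C$ contact term, the quadratic-in-$C$ imitation term $\alpha C^2$, and the relapse coupling through $S_1$; each is individually non-negative once the explicit components of $\bm w$ are plugged in and the parameter positivity together with $p,q \in (0,1)$ is invoked. With $a > 0$ and $b > 0$ in hand, Theorem~4.1 of \cite{Castilo-Chavez:2004} immediately yields that the bifurcation at $\mathcal{R}_0 = 1$ is backward, completing the proof of Theorem~\ref{thm:bifurcation}.
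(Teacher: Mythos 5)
Your proposal follows exactly the paper's own route: taking $\beta$ as the bifurcation parameter via \eqref{eq:bifparameter}, verifying the simple zero eigenvalue of $\bm J^*(E^0,\beta^*)$, exploiting the non-singular block $\bm X$ to force $v_1=v_2=0$ so that only the $g_3$ second partials survive in \eqref{eq:a-and-b}, and concluding $a>0$, $b>0$ so that Theorem~4.1 of \cite{Castilo-Chavez:2004} yields the backward bifurcation. The plan is correct and complete in its essential steps; it is the same argument the paper gives in Section~\ref{sec:bifurcation}.
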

We illustrate the result of Theorem~\ref{thm:bifurcation} numerically by creating a bifurcation curve around $\mathcal{R}_0=1$ as shown in Fig.~\ref{fig:bfr}, considering the estimated model parameter values: $\pi=13820,\beta=8.5\times10^{-6},\alpha=0.00018,\mu=0.01316,\varepsilon=0.88,\theta=0.01,\gamma=0.9,\sigma=0.6,p=0.2$, and $q=0.5$. For this case, we have that $\alpha^*=0.0035$.
\begin{figure}[ht!]
\centering
\includegraphics[width=\linewidth]{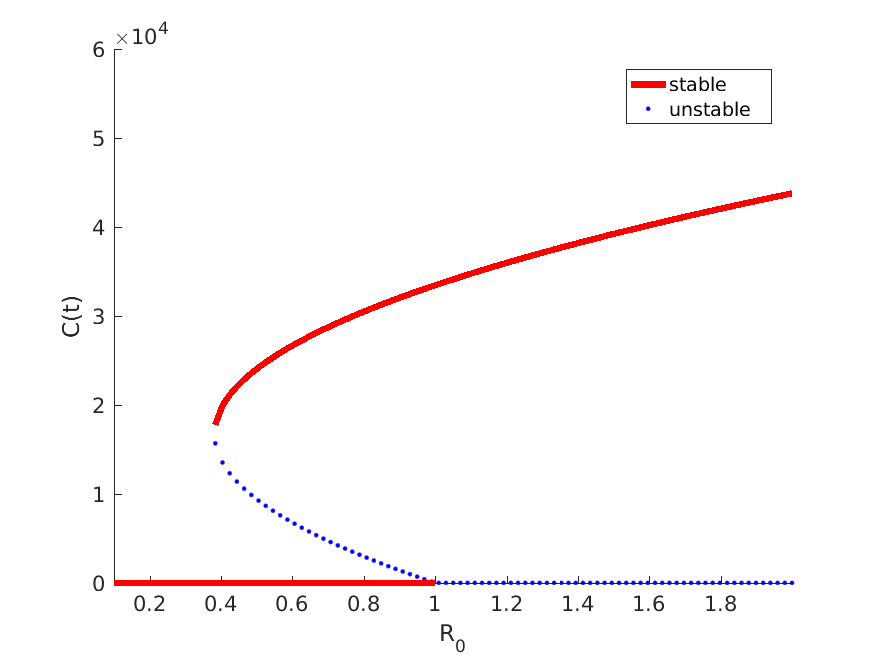}
\caption{\label{fig:bfr} Backward bifurcation curve in the plane $(\mathcal{R}_0,C)$. The estimated parameter values are, $\pi=13820,\beta=8.5\times 10^{-6},\alpha=0.0002,\mu=0.01316,\varepsilon=0.88,\theta=0.01,\gamma=0.9,\sigma=0.6,p=0.2$, and $q=0.5$. The threshold value is $\mathcal{R}_{0}^c = 0.3815$. It shows the occurrence of a backward bifurcation of system \eqref{eq:1}. This implies that when $\mathcal{R}_0<1$, a small positive unstable endemic equilibrium (dotted curve) appears while a crime-free and another positive endemic equilibrium (in red) are locally asymptotically stable. It can also be observed that the two endemic equilibria disappear when $\mathcal{R}_0$ is decreased below the critical value $\mathcal{R}_{0}^c<1$.}
\end{figure}
The implications of backward bifurcation are that $\mathcal{R}_0$ does not describe the necessary condition to clear crime when $\mathcal{R}_0<1$, but for crime to be eliminated from the population, we should lower the reproductive number below the critical threshold value, i.e., we require $\mathcal{R}_0<\mathcal{R}_{0}^c$. 

\subsection{Sensitivity analysis}
\label{subsec:sensitivity}

We use sensitivity analysis to investigate model parameters highly impacting $\mathcal{R}_0$. Following \cite{chitnis}, we compute the sensitivity indices of $\mathcal{R}_0$ to the parameters in the model \eqref{eq:1}. These indices show us how relevant each model parameter is in reducing criminality. 

The \enquote{normalized forward sensitivity index} (NFSI) \cite{chitnis} of $\mathcal{R}_0$ to a model parameter is the ratio of the relative change in the variable $\mathcal{R}_0$ to the relative change in the model parameter \cite{doi:10.3934/dcdsb.2018060}. When $\mathcal{R}_0$ is a differentiable parameter function, then the sensitivity index may be alternatively defined using partial derivatives.
\begin{definition}\cite{chitnis} \label{def-1}
The normalized forward sensitivity index of $\mathcal{R}_0$ that depends differentiablly on a parameter, $\chi$, is defined as:
\begin{align}\label{eq:NFSI}
\Psi_{\chi}^{\mathcal{R}_0}:=\frac{\chi}{\mathcal{R}_0}\frac{\partial\mathcal{R}_0}{\partial\chi}.
\end{align}
\end{definition}
Because increasing the mortality rate is neither ethical nor practical, $\mu$, we omit the sensitivity index of $\mathcal{R}_0$ to $\mu$.  Using equation~\eqref{eq:NFSI}, the sensitivity indices of $\mathcal{R}_0$ concerning each of the remaining model parameters are calculated as follows:
\begin{align*}
\Psi_{\beta}^{\mathcal{R}_0}&=\frac{\beta}{\mathcal{R}_0}\frac{\partial\mathcal{R}_0}{\partial\beta}=1,\\
\Psi_{\pi}^{\mathcal{R}_0} &= \frac{\pi}{\mathcal{R}_0}\frac{\partial\mathcal{R}_0}{\partial\pi} = 1,\\
\Psi_{\theta}^{\mathcal{R}_0}&=\frac{\theta}{\mathcal{R}_0}\frac{\partial\mathcal{R}_0}{\partial\theta}=\frac{\theta(1-p)\mu+\theta\varepsilon}{(\mu+\theta+\varepsilon)(p\mu+\theta)} >0,\\
\Psi_{\varepsilon}^{\mathcal{R}_0}&=\frac{\varepsilon}{\mathcal{R}_0}\frac{\partial\mathcal{R}_0}{\partial\varepsilon}=-\frac{\varepsilon}{\mu+\theta+\varepsilon} <0,\\
\Psi_{\gamma}^{\mathcal{R}_0}&=\frac{\gamma}{\mathcal{R}_0}\frac{\partial\mathcal{R}_0}{\partial\gamma}=\frac{\gamma(\mu+\sigma+q\mu\sigma)}{(\mu+\gamma)(\mu+\sigma-q\gamma\sigma)} >0,\\
\Psi_{\sigma}^{\mathcal{R}_0}&=\frac{\sigma}{\mathcal{R}_0}\frac{\partial\mathcal{R}_0}{\partial\sigma}=-\frac{\sigma(1-q\gamma)}{\mu+\sigma-q\gamma\sigma} <0,\\
\Psi_{p}^{\mathcal{R}_0}&=\frac{p}{\mathcal{R}_0}\frac{\partial\mathcal{R}_0}{\partial p}=\frac{p\mu}{p\mu+\theta}>0,\\
\Psi_{q}^{\mathcal{R}_0}&=\frac{q}{\mathcal{R}_0}\frac{\partial\mathcal{R}_0}{\partial q}=-\frac{q\gamma\sigma}{\mu+\sigma-q\gamma\sigma} <0.
\end{align*}
The sensitivity indices' positive (or negative) signs show direct (or indirect) proportionality. The model parameters $\pi$ and $\beta$ have a sensitivity index of $1$ for any choices of the remaining model parameters. This sensitivity index value shows that a $1\%$ increase in recruitment rate to the susceptible class and the effective contact rate will produce a $1\%$ increase of the basic reproduction number, $\mathcal{R}_0$. The parameters $\theta, \gamma$, and $p$ have positive sensitivity indices, whereas the parameters $\varepsilon, \sigma$, and $q$ have negative sensitivity indices. In Table \ref{tab:SIndex}, we show the sensitivity indices of the remaining model parameters using the estimated model parameter values provided in Table \ref{tab:model-params-b}. 

\begin{table}[ht!]
    \centering
    \caption{\label{tab:SIndex} Sensitivity indices of $\mathcal{R}_0$ corresponding to the model parameter values provided in Table \ref{tab:model-params-b}.}
    \begin{tabular}{c|c}
         \hline
         Model parameter &  Sensitivity index value \\
         \hline
         $\pi$ &   1\\
         $\beta$ &  1\\
         $\theta$ & 0.67451972\\
         $\epsilon$ &  -0.65934066\\
         $\gamma $ &  1.4243619\\
         $\sigma $ & -0.96835443\\
         $p$ &  0.02877698\\
         $q$ &  -0.4556962\\
         \hline 
    \end{tabular}
\end{table}
Table \ref{tab:SIndex} shows that the model parameter $\gamma$ has a sensitivity index of $1.4243619$, which means that a 1\% increase in the relapse rate will produce a 1.4243619\% increase in the basic reproduction number. On the other hand, the parameter $\sigma$ has a sensitivity index of $-0.96835443$, which shows that a $1\%$ increase in the conviction rate will produce a $-0.96835443\%$ decrease in the basic reproduction number. A similar interpretation can be made for the remaining model parameters.

\section{Numerics}
\label{sec:numerics}

\subsection{Estimated model parameters}
\label{subsec:model-params}

The estimation of parameters in any model validation process is a challenging task. We will consider hypothetical assumptions to illustrate the usefulness of our model in tracking the dynamics of crimes in the presence of imitation. We consider demographic parameters to estimate some of the model parameters. For the \emph{per capita} death rate $\mu$, we assume the average life expectancy of the human population is $75$ years. This value was approximately the life expectancy of Brazil in 2018 \cite{IBGE}. Hence, we consider $\mu=1/75$ per year. On average, the birth rate estimate of Brazil for the year 2018 was $13.82$ births per $1000$ people \cite{IBGE}. Using this data, we assume a birth rate of $13.82$ births per $1000$ people so that $\pi=0.015$ per year. For the parameters $p$ and $q$, we assume estimates on the interval $(0,1)$. The conviction rates usually fall from approximately $10\%$ to approximately $85\%$. Hence, we shall assume that $\sigma$ lies on the interval $(0.1,0.85)$. The relapse rate $\gamma$ is related to the duration of imprisonment. The maximum duration of imprisonment is $30$ years \cite{Pedro}. This duration varies from one prison to the other, so we assume that $\gamma$ lies on the interval $(0.03,1)$ for imprisonment ranging between a year and $30$ years. We shall assume that the contact rate $\beta$ to be taken from $(0,1)$. Finally, we shall estimate the rate of transfer of individuals between the compartments $S_1$ and $S_2$, $\theta$ and $\varepsilon$, to be lying on the intervals $(0.1, 0.8)$ and $(0,1)$, respectively. 

\begin{table}[!htb]
    \centering
    \caption{\label{tab:model-params-a} A summary of the estimated values of the model parameters for the case when $\mathcal{R}_0=0.8494 < 1$.}
    \begin{tabular}{c|c|c}
                \hline \hline
                \multicolumn{1}{c|}{Model parameter} & \multicolumn{1}{c}{Value} & \multicolumn{1}{|c}{Source} \\
                \hline
         $\pi$ &  $0.01382\times 10^6$ & \cite{IBGE} \\
         $\beta$ & $0.00000065$ & assumed\\
         $\alpha$ & $0.000002$ & assumed\\
         $\mu$ & $1/75$ & \cite{IBGE}\\
         $\epsilon$ & $0.2$ & assumed\\
         $\theta$ & $0.09$ & assumed\\
         $\gamma$ & $0.8$ & \cite{Pedro}\\
         $\sigma$ & $0.5$ & literature\\
         $p$ & $0.2$ & assumed\\
         $q$ & $0.4$ & assumed\\
                \hline
            \end{tabular}
\end{table}
\begin{table}[!htb]
    \centering
    \caption{\label{tab:model-params-b} A summary of the estimated values of the model parameters for the case when $\mathcal{R}_0=1.2883 > 1$.}
    \begin{tabular}{c|c|c}
        \hline \hline
\multicolumn{1}{c|}{Model parameter} & \multicolumn{1}{c}{Value} & \multicolumn{1}{|c}{Source} \\
                \hline
         $\pi$ &  $0.01382\times 10^6$ & \cite{IBGE}\\
         $\beta$ & $0.0000018$ & assumed\\
         $\alpha$ & $0.000002$ & assumed\\
         $\mu$ & $1/75$ & \cite{IBGE}\\
         $\epsilon$ & $0.2$ & assumed\\
         $\theta$ & $0.09$ & assumed\\
         $\gamma $ & $0.8$ & \cite{Pedro}\\
         $\sigma $ & $0.6$ & literature\\
         $p$ & $0.2$ & assumed\\
         $q$ & $0.4$ & assumed\\
                \hline
            \end{tabular} 
\end{table}

\subsection{Numerical simulations}
\label{subsec:simulations}

We hypothetically consider a total population size $N=10^6$ in all the experiments.

\paragraph{Numerical simulation for the local stability of crime-free equilibrium.} We simulate our crime model \eqref{eq:1} numerically for time $500 \ \text{years}$ with $0.01$ time-step starting from randomly and uniformly chosen initial conditions $(S_1(0), S_2(0), C(0), R(0))$ in the interval $[0, 10^6]$. We use an explicit Runge-Kutta order 5(4) method for numerical integration \cite{DORMAND198019}. In our simulations, we consider the model parameters in Table \ref{tab:model-params-a}. These parameter values correspond to $\mathcal{R}_0=0.6462 < 1$. Hence, we obtain multivariate time-series data $\{S_1(t), S_2(t), C(t), R(t)\}$ for a time $500$. The results are shown in Fig.~\ref{fig:crime-free}.
\begin{figure}[ht!]
\centering
\includegraphics[width=\linewidth]{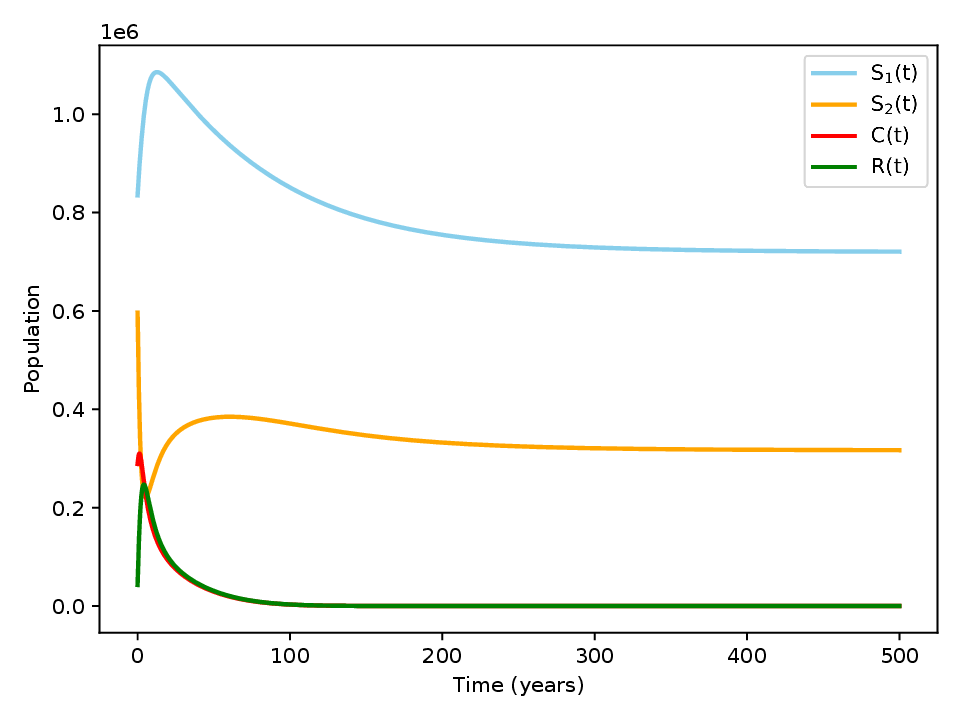}
\caption{\label{fig:crime-free} Time series of the state variables $S_1(t), S_2(t), C(t)$, and $R(t)$ showing local stability of the crime-free equilibrium $E^0$. The system \eqref{eq:1} was simulated numerically for time $500 \ \text{months}$ with $0.01$ time-step starting from randomly and uniformly chosen initial conditions $(S_1(0), S_2(0), C(0), R(0))$ in the interval $[0, 10^6]$. We considered the parameter values in Table \ref{tab:model-params-a}. These parameters correspond to $\mathcal{R}_0= 0.6462$. }
\end{figure}
Fig.~\ref{fig:crime-free} shows that the individuals who are not at risk of committing a crime, $S_1(t)$, and the individuals who are at risk of committing a crime, $S_2(t)$, initially decrease and blow up, that is, as time increases, the population also increases. On the other hand, the individuals committing crime, $C(t)$, and those who become convicted and jailed due to crime, $R(t)$, decline to zero. Therefore, the system approaches the crime-free equilibrium $E^0=(S_1^0, S_2^0, 0, 0)$. This implies that the crime-free equilibrium $E^0$ of model \eqref{eq:1} is locally asymptotically stable, which agrees with the analytical results. 

\paragraph{Numerical simulation showing the existence of endemic equilibrium.} Here, we perform numerical simulations of model \eqref{eq:1} when $\mathcal{R}_0 >1$ to show that crime persists in the population and that a unique endemic equilibrium exists. Here, the parameter values used are displayed in Table \ref{tab:model-params-b}. These values correspond to $\mathcal{R}_0=1.5108$. The results are illustrated in Fig. \ref{fig:endemic}.
\begin{figure}[ht!]
\centering
\includegraphics[width=\linewidth]{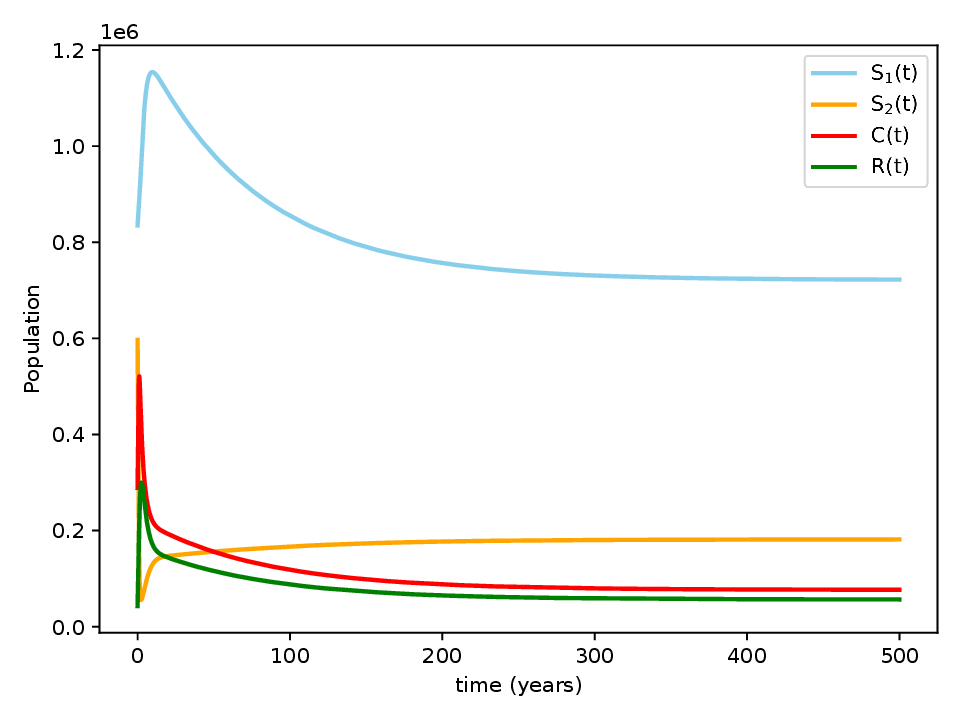}
\caption{\label{fig:endemic} Time series of the state variables $S_1(t), S_2(t), C(t)$, and $R(t)$ showing local stability of the crime-free equilibrium $E^*$. The system \eqref{eq:1} was simulated numerically for time $500 \ \text{months}$ with $0.01$ time-step starting from randomly and uniformly chosen initial conditions $(S_1(0), S_2(0), C(0), R(0))$ in the interval $[0, 10^6]$. We considered the parameter values in Table \ref{tab:model-params-b}. These parameters correspond to $\mathcal{R}_0=1.5108$. }
\end{figure}
Fig.~\ref{fig:endemic} shows that when $\mathcal{R}_0>1$, crime persists in the population, and the dynamics tend to an endemic equilibrium $E^*$. This shows that a unique endemic equilibrium $E^*$ is locally asymptotically stable, and the crime-free equilibrium $E^0$ becomes unstable when $\mathcal{R}_0>1$.

We further investigate the impact of the model parameters $\sigma$ (the conviction rate) and $\gamma$ (the release rate) on the basic reproduction number, $\mathcal{R}_0$, in reducing crime in the population. To see this, we use a contour plot (Fig.~\ref{fig:contour}) of $\mathcal{R}_0$ with varying the parameters $\sigma \in (0.10,0.85)$ and $\gamma \in (0.03, 1)$.

\begin{figure}[ht!]
\centering
\includegraphics[width=\linewidth]{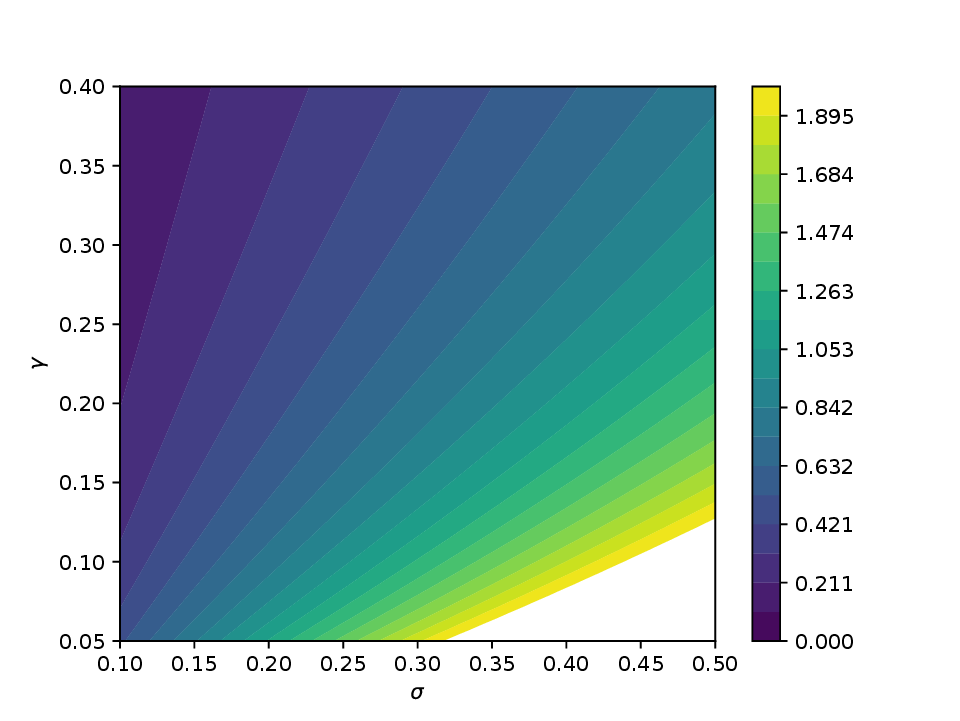}
\caption{\label{fig:contour} A contour plot how model parameters $\sigma$ and $\gamma$ affect $\mathcal{R}_0$. The parameters $\sigma$ and $\gamma$ are varied in the intervals $[0.1, 0.5]$ and $[0.05, 0.4]$, and the remaining parameters are as provided in Tables \ref{tab:model-params-a} and \ref{tab:model-params-b}.}
\end{figure}
Fig.~\ref{fig:contour} shows that $\mathcal{R}_0$ is highly dependent on the parameter $\gamma$, which is also verified using the sensitivity index of $\mathcal{R}_0$ to $\gamma$. This result shows that increasing $\gamma$ and decreasing $\sigma$ lead to a decrease in $\mathcal{R}_0$.

\paragraph{Effects of imitation in crime dynamics.} Our theoretical analysis shows we should lower the basic reproduction number $\mathcal{R}_0$ below the critical threshold value $\mathcal{R}_0^c$. We also notice that the quantity $\mathcal{R}_0$ is independent of the model parameter $\alpha$. Therefore, in this subsection, we will numerically investigate the effect of the imitation coefficient,$\alpha$, on the class $C(t)$ population dynamics. To this end, we vary the imitation coefficient in $\alpha\in \{10^{-5}, 0.0001, 0.0002\}$. We consider the remaining model parameters in Tables \ref{tab:model-params-a} and \ref{tab:model-params-b}. That means we carried out two experiments: the case of $\mathcal{R}_0>1$ and the case of $\mathcal{R}_0 < 1$.

In Fig.~\ref{fig:crime-alpha}, we observe that as we increase the value of the parameter $\alpha$, the number of criminal individuals also increases. On the other hand, if we decrease $\alpha$, the criminals will decline and approach zero. 

\begin{figure}[ht!]
    \centering
    \subfigure(a){\includegraphics[width=0.80\linewidth]{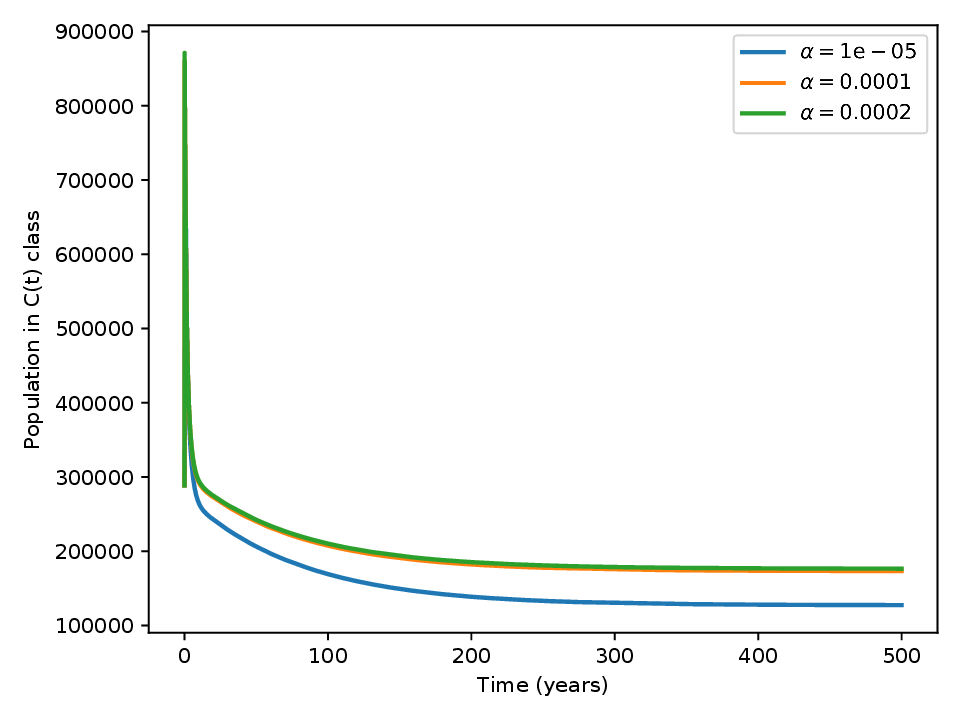}} 
    \subfigure(b){\includegraphics[width=0.80\linewidth]{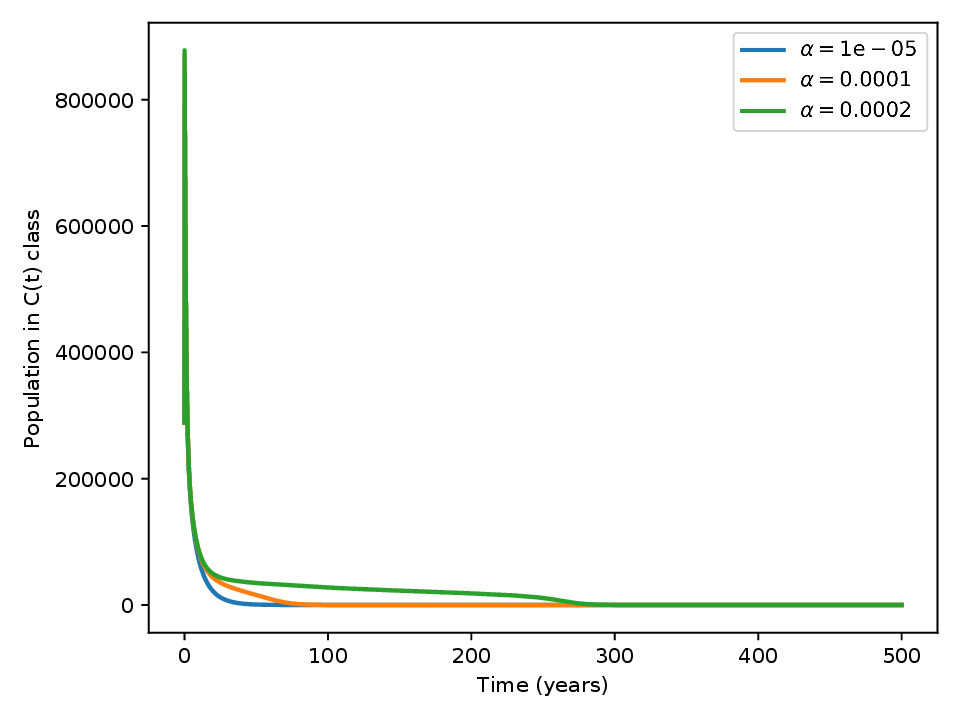}} 
    
    \caption{Impact of varying the imitation coefficient $\alpha$ on the dynamics of the criminality class $C(t)$. All parameters were fixed in both panels as provided in Table \ref{tab:model-params-a} and \ref{tab:model-params-a}, except that $\alpha$ was varied. In panel (a) $\mathcal{R}_0 \approx 1.5086 >0$ showing the stability of a unique endemic equilibrium. In panel (b) $\mathcal{R}_0 \approx 0.3762 <1$ and $\alpha^*\approx 0.0035 >\alpha$ for each value of $\alpha$ showing non-existence of endemic equilibrium solution. }
    \label{fig:crime-alpha}
\end{figure}

\section{Conclusion}
\label{sec:conclusion}

In this paper, we developed a mathematical model of crime dynamics in the presence of imitation, described by a system of nonlinear ordinary differential equations. Motivated by the epidemiological models of infectious diseases \cite{vanden}, our model is analyzed in terms of the basic reproduction number to suggest any controlling strategies for communities and policymakers to reduce the likelihood of an individual engaging in a criminal career. The crime-free equilibrium of the proposed model is locally asymptotically stable if the basic reproduction number is less than unity. A bifurcation analysis showed that the model exhibits a backward bifurcation. The backward bifurcation shows that it is insufficient to bring a basic reproduction number less than unity to clear crime. The basic reproduction number should be less than the critical threshold parameter $\mathcal{R}_{0}^c$ to clear any criminal act. If $\mathcal{R}_0$ is greater than unity, our proposed model has a unique endemic equilibrium. Our numerical simulations support our theoretical findings. In particular, the numerical results showed that the imitation coefficient significantly affects the spread of crimes. We also studied the sensitivity analysis of the basic reproduction number against the model parameters. The sensitivity analysis shows that the relapse rate highly influences $\mathcal{R}_0$. Therefore, as a controlling strategy to minimize any criminal activity, we should minimize the proportion of individuals leaving prisons and becoming criminals.

The model presented in this paper is not without limitations. The model assumes homogeneous mixing, which is practically impossible in communities with crimes. In reality, due to the differences in human behavior, the initiation of individuals into crime varies. We can also consider stochastic effects to model the unpredictability of human behavior. Hence, including stochasticity in human behavior can significantly improve this model. The recruitment of criminal members is assumed to be driven by imitation. But in practice, this is not the only means, as individuals can commit a criminal act by themselves due to forcing circumstances such as unemployment and poverty. When reliable crime-related data is available, we can validate the proposed model. We can extend the proposed model to include policies on managing convicts and the potential impact of correctional services. Despite these limitations, the model presents an exciting tool to track the dynamics of criminals and their convictions. 


\bibliographystyle{plain}
\bibliography{references}

\end{document}